\def\today{\number\day\ \ifcase\month\or
	January\or February\or March\or April\or May\or June\or
	July\or August\or September\or October\or November\or December\fi
	\space \number\year}
\def\gobble#1#2{}
\def\shortdate{\number\day/\number\month/\expandafter\gobble\number\year}
\newcommand{\hypergeom}[5]{\mbox{$
_#1 F_#2\left( \! \left.
\begin{array}{c}
\multicolumn{1}{c}{\begin{array}{c} #3
\end{array}}\\[1mm]
\multicolumn{1}{c}{\begin{array}{c} #4
            \end{array}}\end{array}
\! \right| \displaystyle{#5}\right) $} }
\newcommand{\rr}{\mathbb{R}}
\newcommand{\nn}{\mathbb{N}}
\def\red#1{\color{red}{#1}}
\def\a{\alpha}
\def\b{\beta}
\newtheorem{theorem}{Theorem}[section]
\newtheorem{proposition}[theorem]{Proposition}
\newtheorem{lemma}[theorem]{Lemma}
\newtheorem{corollary}[theorem]{Corollary}
\theoremstyle{definition}
\newtheorem{remark}[theorem]{Remark}
\newtheorem{remarks}[theorem]{Remarks}
\numberwithin{figure}{section}
\numberwithin{equation}{section}
\numberwithin{table}{section}
\newcommand{\comment}[1]{}
\def\l{\lambda}
\def\beq{\begin{equation}}
\def\eeq{\end{equation}}
\definecolor{dkg}{rgb}{0,0.7,0}
\definecolor{dkr}{rgb}{0.9,0,0}
\definecolor{dkb}{rgb}{0,0,0.7}
\definecolor{purple}{rgb}{0.5,0,0.7}
\definecolor{gold}{rgb}{0.83, 0.69, 0.22}
\def\url#1{\href{#1}{#1}}
\begin{document}
\title{Interlacing of zeros from different sequences of Meixner-Pollaczek, Pseudo-Jacobi and Continuous Hahn polynomials}	
%	\title{Interlacing of zeros of orthogonal polynomials on the real line of same degree from different sequences}
	
	\author{A.S. Jooste$^{1}$ and K. Jordaan$^{2}$ \\[2.5pt]
$^{1}$ Department of Mathematics and Applied Mathematics,\\ University of Pretoria, Pretoria, South Africa\\[2.5pt]
$^{2}$ Department of Decision Sciences,\\
University of South Africa, Pretoria, 0003, South Africa\\[2.5pt]
Email: {alta.jooste@up.ac.za};\enskip {jordakh@unisa.ac.za}}
	
	\maketitle
	
\begin{abstract} In this paper we consider interlacing of the zeros of polynomials from different sequences $\{p_n\}$ and $\{g_n\}$. In our main result we consider a mixed recurrence equation necessary for existence of a linear term $(x-A)$ so that the $(n+1)$ zeros of $(x-A)g_n(x)$ interlace with the $n$ zeros of $p_n$. We apply our result to Meixner-Pollaczek, Pseudo-Jacobi and Continuous Hahn polynomials to obtain new interlacing results for the zeros of polynomials of the same degree from different polynomial sequences. 
	\end{abstract} 
 
{\footnotesize\emph{Mathematics Subject Classification:}\\ \emph{Key words: Orthogonal polynomials; Zeros; Interlacing; Meixner-Pollaczek polynomials; Pseudo-Jacobi polynomials; Continuous Hahn polynomials}}

\section{Introduction}
 A sequence of real polynomials $\{p_n\}_{n=0}^\infty$, where $p_n$ is of exact degree $n\in \nn,$ is orthogonal with respect to a positive weight $w(x)$ on an interval $[a,b],$ if it satisfies
 $$\int_{a}^{b}x^jp_n(x) w(x)dx\begin{cases}=0,~\mbox{for} ~j=0,1,\dots,n-1\\\neq0,~\mbox{for} ~j=n.\end{cases}$$
Zeros of polynomials in such a sequence of orthogonal polynomials are all real, distinct and have the property that the zeros of consecutive polynomials in the sequence are interlacing (cf. \cite{chiharabook,szego}), i.e.,  if $\{x_{k,n}\}_{k=1}^n$ denotes the zeros of $p_n$, then
$$x_{1,n}<x_{1,n-1}<x_{2,n}<\cdots <x_{n-1,n}<x_{n-1,n-1}<x_{n,n}.$$ If polynomials $p_n$ and $q_n$ are of the same degree, the zeros are said to interlace if either $$x_{1,n}<y_{1,n}<x_{2,n}<y_{2,n}\cdots<x_{n,n}<y_{n,n}$$ or $$y_{1,n}<x_{1,n}<y_{2,n}<x_{2,n}\cdots<y_{n,n}<x_{n,n},$$ where $\{y_{k,n}\}_{k=1}^n$ denotes the zeros of $q_n$.

The question that arises naturally concerning any polynomial, especially when there are real zeros, is the location and behaviour of the zeros. When a polynomial has only real zeros, it is important for applications to know more about their properties, for example, interlacing and monotonicity with respect to the appropriate parameters. As an example, consider the study of the Laguerre-Pólya class of entire functions, which are functions that can be obtained as a limit (uniformly on compact subsets of C) of a sequence of polynomials with negative real roots. We refer the reader here to the recent papers \cite{MF} and \cite{ASokal}.

Zeros of polynomials, not in the same orthogonal sequence, were first considered in 1967 by Levit \cite{Levit} for Hahn polynomials. 
Levit proved that, for  $\gamma>m>1$, the zeros of Hahn polynomials $Q_m(x;\a,\b,\gamma)$, interlace with the zeros of $Q_m(x;\a,\b+1,\gamma)$,  $Q_m(x;\a-1,\b+1,\gamma)$, $Q_m(x;\a,\b,\gamma+1)$  and $Q_m(x;\a+1,\b+1,\gamma-1)$. 
In 1989, Askey \cite{Askey} proved that the zeros of Jacobi polynomials $P_n^{(\a,\b)}$ and $P_n^{(\a+1,\b)}$ interlace. Further results on the interlacing of the zeros of Jacobi polynomials from different sequences (with shifted parameters) followed in \cite{DJM}, including a proof of a conjecture by Askey that the zeros of $P_n^{(\a+2,\b)}$ and $P_n^{(\a,\b)}$ interlace. The zeros of 
Meixner, Krawtchouk, Meixner-Pollaczek and Hahn polynomials from different sequences were considered in \cite{CS,kjft,JT} and the zeros of some $q$-orthogonal polynomials were discussed in \cite{Gochhayat_et_al_2016, Jordaan_Tookos_2010,  Moak, KoepfJT_2017}.  

An investigation of interlacing of zeros from different sequences of Meixner-Pollaczek, Pseudo-Jacobi and Continuous Hahn polynomials, yielded examples that could not be proved with existing results. Hence, we consider polynomials $p_n$ and $g_n$ of degree $n$ that do not have interlacing zeros but that satisfy a mixed recurrence equation of a specific type involving a linear coefficient $(x-A)$, and prove that $A$ completes the interlacing in the sense that the zeros of $(x-A)g_n$ and $p_n$ interlace. In order to set the context and clarify the need for a new general interlacing result, we provide a short summary of previous results on interlacing of zeros of polynomials from different sequences. 

\begin{lemma} \cite[Thm 2.3]{DJ}\label{oldmain}
Consider monic polynomials $p_n(x)$ and $q_{n+1}(x)$ with $n$ real zeros $\{x_{i,n}\}_{i=1}^n$ and $n+1$ real zeros
$\{z_{i,n+1}\}_{i=1}^{n+1}$, respectively, satisfying the interlacing property \begin{equation} z_{1,n+1}<x_{1,n}<z_{2,n+1}<\dots<z_{n,n+1}<x_{n,n}<z_{n+1,n+1}\end{equation} on the (finite or infinite) interval $(c,d)$.
Fix  $k,n \in\nn$  with $k < n$ and  suppose $g_{n-k}$ is a polynomial of degree $n-k$ that satisfies
\begin{equation}\label{13}f(x)g_{n-k}(x)=D_{k}(x) p_{n}(x) + H(x) q_{n+1}(x)\end{equation}
where $f(x) \neq 0$ for $x\in(c,d)$ and $H(x)$, $D_{k}(x)$ are polynomials with deg($D_{k})=k$. Then the $n$ real, simple zeros of $D_{k} g_{n-k}$ interlace with the zeros of $q_{n+1}$ if $g_{n-k}$ and $q_{n+1}$ are co-prime.
\end{lemma}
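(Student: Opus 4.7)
The plan is to evaluate the mixed recurrence at the zeros of $q_{n+1}$ and then read off the sign pattern of $D_k g_{n-k}$ at those points. Since $q_{n+1}(z_{j,n+1})=0$ for every $j=1,\dots,n+1$, the identity collapses to
\[
f(z_{j,n+1})\,g_{n-k}(z_{j,n+1}) \;=\; D_k(z_{j,n+1})\,p_n(z_{j,n+1}).
\]
All $z_{j,n+1}$ lie in $(c,d)$, so $f(z_{j,n+1})$ never vanishes and, by continuity, keeps a constant sign as $j$ varies. The coprimeness hypothesis is exactly what guarantees that the left-hand side is nonzero at every $z_{j,n+1}$, hence so is the right-hand side.

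The second step is a careful sign count of $p_n(z_{j,n+1})$. Writing $p_n(x)=\prod_{i=1}^n(x-x_{i,n})$ and using the strict interlacing hypothesis, exactly $j-1$ of the factors $z_{j,n+1}-x_{i,n}$ are positive and $n-j+1$ are negative, so $\operatorname{sign} p_n(z_{j,n+1})=(-1)^{\,n-j+1}$. In particular, $p_n$ alternates sign on the consecutive $z_{j,n+1}$'s. Rewriting the identity as
\[
D_k(z_{j,n+1})\,g_{n-k}(z_{j,n+1}) \;=\; \frac{f(z_{j,n+1})\,g_{n-k}(z_{j,n+1})^2}{p_n(z_{j,n+1})},
\]
which is legitimate since $g_{n-k}(z_{j,n+1})\neq 0$ by coprimeness and $p_n(z_{j,n+1})\neq 0$ by the strict interlacing, the sign of $D_k(z_{j,n+1})g_{n-k}(z_{j,n+1})$ equals the sign of $f(z_{j,n+1})/p_n(z_{j,n+1})$, which therefore alternates in $j$.

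The third step is a standard intermediate-value and degree-count argument. The polynomial $D_k g_{n-k}$ is continuous and changes sign across each of the $n$ consecutive pairs $(z_{j,n+1},z_{j+1,n+1})$, so it has at least one zero in each such subinterval, giving at least $n$ real zeros in $(c,d)$. But $\deg(D_k g_{n-k})=k+(n-k)=n$, so these are \emph{all} of its zeros, they are simple, and exactly one of them lies strictly between each consecutive pair of zeros of $q_{n+1}$; this is the required interlacing.

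I do not anticipate a serious obstacle: the only subtlety is justifying that $D_k g_{n-k}$ does not vanish at any $z_{j,n+1}$, which is where the coprimeness of $g_{n-k}$ and $q_{n+1}$ is genuinely used (to conclude $g_{n-k}(z_{j,n+1})\neq 0$, and hence, via the identity, $D_k(z_{j,n+1})\neq 0$ as well). Without this, the alternating-sign conclusion could fail at a point where both factors vanish, and the IVT step would give only a weak interlacing.
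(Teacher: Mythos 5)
Your proof is correct and follows essentially the same sign-change-and-counting argument as the cited source \cite{DJ} and as this paper's own proof of the analogous Theorem \ref{DJboundsext}(ii): evaluate the mixed relation at consecutive zeros of $q_{n+1}$, use coprimality and the nonvanishing of $f$ to obtain strict sign alternation of $D_k g_{n-k}$ at those points, and then count the resulting sign changes against the degree $n$. The only implicit step is treating $f\neq 0$ on $(c,d)$ as giving $f$ constant sign there (i.e.\ assuming $f$ continuous), which is precisely the reading the paper itself adopts in the proof of Theorem \ref{DJboundsext}(ii).
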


\begin{lemma}\cite[Thm 3]{Brezinski_2004}\cite[Thm 1.1, Cor. 1.2 and 1.3]{kjft}\label{l1}
Let $q_{n+1}$ and $p_{n}$ be polynomials with $n+1$ real zeros
$z_{1,n+1}<z_{2,n+1}<\dots<z_{n+1,n+1}$ and $n$ real zeros
$x_{1,n}<x_{2,n}<\dots<x_{n,n}$ respectively satisfying the
interlacing property \begin{equation} z_{1,n+1}< x_{1,n} < z_{2,n+1} < x_{2,n} < \dots <z_{n,n+1}<x_{n,n}< z_{n+1,n+1}.\label{int}\end{equation} on the (finite or infinite)
interval $(c,d)$. Assume that a polynomial $g$ of degree $n+1$ or
$n$ satisfies the equation \begin{equation}
g(x)=D(x)p_{n}(x)+H(x)q_{n+1}(x).\label{main}\end{equation}
\begin{itemize}
    \item[(i)] If both $D(x)$ and $H(x)$
are continuous and have constant signs on $(c,\ d)$, then all the
zeros of $g$ are real and simple, the zeros of $g$ and $q_{n+1}$
interlace and the zeros of $g$ and $p_{n}$ interlace.
\item[(ii)] If $D(x)$ is continuous
and has constant sign on $(c,\ d)$, then all the zeros of $g$ are
real and simple and the zeros of $g$ and $p_{n+1}$ interlace.
\item[(iii)] If $g$ is of degree $n$ and $H(x)$ is continuous with constant sign
on $(c,\ d)$, then all the zeros of $g$ are real and simple and the
zeros of $g$ and $p_{n}$ interlace.
\end{itemize}
\end{lemma}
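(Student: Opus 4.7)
The central idea is to evaluate the identity \eqref{main} at the zeros of $p_n$ and of $q_{n+1}$, using the interlacing \eqref{int} to read off the sign of $g$ at those points. Two elementary consequences of \eqref{int} will be used repeatedly: since $q_{n+1}$ has simple real zeros and each $x_{i,n}$ lies in a distinct open interval between two consecutive zeros of $q_{n+1}$, the values $q_{n+1}(x_{1,n}),\dots,q_{n+1}(x_{n,n})$ alternate in sign; symmetrically, the values $p_n(z_{1,n+1}),\dots,p_n(z_{n+1,n+1})$ alternate in sign.

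For part (i), substituting $x=z_{j,n+1}$ into \eqref{main} gives $g(z_{j,n+1})=D(z_{j,n+1})\,p_n(z_{j,n+1})$; the constant sign of $D$ on $(c,d)$ combined with the sign alternation of $p_n(z_{j,n+1})$ forces $g$ to alternate in sign at the $n+1$ points $z_{j,n+1}$, so the intermediate value theorem places one zero of $g$ in each open interval $(z_{j,n+1},z_{j+1,n+1})$ and yields the interlacing of the zeros of $g$ and $q_{n+1}$. Substituting $x=x_{i,n}$ instead gives $g(x_{i,n})=H(x_{i,n})\,q_{n+1}(x_{i,n})$, and the constant sign of $H$ together with the alternation of $q_{n+1}(x_{i,n})$ produces one zero of $g$ in each of the $n-1$ intervals $(x_{i,n},x_{i+1,n})$; combined with the location obtained above, this gives the interlacing with $p_n$. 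Part (ii) is just the first of these two evaluations, which needs only sign-definiteness of $D$, and part (iii) is just the second, which needs only sign-definiteness of $H$.

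The remaining work is to argue that every zero of $g$ is accounted for and is simple. In each case I would use the exact degree of $g$ together with a multiplicity count: a sign change at a distinct point certifies only odd multiplicity, but since $g$ has exact degree $n$ or $n+1$ the total multiplicity is fixed, and the pairing of complex conjugate roots of the real polynomial $g$ forces any unaccounted zero to be real; the total-degree budget then leaves no room for any of the already located zeros to exceed multiplicity one or for the extra zero to be multiple. In particular, in part (iii) the unique remaining zero must be a simple real zero lying outside the open interval $(x_{1,n},x_{n,n})$, and either position (to the left of $x_{1,n}$ or to the right of $x_{n,n}$) produces one of the two admissible interlacing patterns between two length-$n$ zero sequences. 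The most subtle bookkeeping is this final multiplicity count that upgrades "odd multiplicity at the located points" to "simple zeros, with all roots real"; everything else is a direct intermediate-value argument on \eqref{main}.
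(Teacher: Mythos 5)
This lemma is imported verbatim from \cite{Brezinski_2004} and \cite{kjft} and the paper gives no proof of it, but your argument --- evaluating \eqref{main} at the zeros of $q_{n+1}$ and of $p_n$, using the sign alternation of $p_n(z_{j,n+1})$ and of $q_{n+1}(x_{i,n})$ guaranteed by \eqref{int}, and then closing with the degree-versus-multiplicity count that forces every located root to be simple and the one possible leftover root to be real --- is correct and is exactly the technique of those sources and of the paper's own proof of Theorem \ref{DJboundsext}, where part (ii) is reproved by checking that $g_n$ changes sign between consecutive zeros of $q_{n+1}$. The only caveat is that ``$p_{n+1}$'' in part (ii) of the statement is a typo for $q_{n+1}$, which is how you have (correctly) read it.
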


Lemma \ref{oldmain} applies to polynomials $g$ of degree $1,2, \dots,n-1$. The case where $g$ has degree $n$ or $n+1$ was considered in Lemma \ref{l1} and several other papers expanded on the case where $g$ is of degree $n+1$, see for example \cite{Brezinski_2004, Gochhayat_et_al_2016, Jordaan_Tookos_2010, KoepfJT_2017}. The question that remains is what happens to the zeros when the coefficient $H(x)$ changes sign on the interval $(c,d)$. We address this case in our main result (see Theorem \ref{DJboundsext}) in \S \ref{Stieltjes} and use Theorem \ref{DJboundsext} to obtain new interlacing results for zeros of Meixner-Pollaczek polynomials in \S \ref{MP1}, Pseudo-Jacobi polynomials in \S \ref{PJ} and Continuous Hahn polynomials in \S \ref{CH}.

\section{Main result}\label{Stieltjes}

Interlacing properties of zeros of polynomials from different sequences usually are derived using sign change arguments applied to polynomials written as linear combinations of polynomials with interlacing zeros. See, for example, \cite[p. 117]{Krein and Nudelman}. The next result proves the existence of an extra interlacing point for interlacing of zeros of two polynomials of the same degree when the polynomials in question satisfy a mixed recurrence equation of a specific form.   

\begin{theorem}\label{DJboundsext} Consider monic polynomials $p_n(x)$ and $q_{n+1}(x)$ with $n$ real zeros $\{x_{i,n}\}_{i=1}^n$ and $n+1$ real zeros
$\{z_{i,n+1}\}_{i=1}^{n+1}$, respectively, satisfying the interlacing property \begin{equation}\label{intz1}z_{1,n+1}<x_{1,n}<z_{2,n+1}<\dots<z_{n,n+1}<x_{n,n}<z_{n+1,n+1}\end{equation} on the (finite or infinite) interval $(c,d)$. Let
 $g_{n}$ be any monic polynomial of degree $n$ with $n$ real zeros, that satisfies, for each $n\in\nn,$ 
\begin{equation}\label{14}f(x)g_{n}(x)=D(x) p_n(x) + H(x) q_{n+1}(x)\end{equation}
where $f(x)\neq 0$ for $x\in(c,d)$. 
\begin{itemize}
    \item[(i)] If  $D(x)$, $H(x)$ are polynomials with $H(x)=x-A$, $A\in \rr$
 and $f(x)>0$, then, for each fixed $n\in\nn$,
 the $n+1$ real, simple zeros of $H(x) g_{n}(x)$ interlace with the zeros of
$p_{n}(x)$ if $g_{n}(x)$ and $p_{n}(x)$ are co-prime;
\item[(ii)] If $D(x)\neq 0$ for $x\in(c,d)$, then, for each fixed $n\in\nn$,
the zeros of $g_n(x)$ are real and simple and interlace with the zeros of $q_{n+1}(x)$ if $g_n(x)$ and $q_{n+1}(x)$ are co-prime.
\end{itemize} 

\end{theorem}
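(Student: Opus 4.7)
My approach for both parts is a sign-change argument: I will evaluate the mixed relation \eqref{14} at the zeros of either $p_n$ or $q_{n+1}$, use the interlacing \eqref{intz1} to read off a strictly alternating sign pattern, and then transfer this alternation either to $Hg_n$ (for part (i)) or to $g_n$ (for part (ii)). The only subtlety is that, because $H(x)=x-A$ itself can change sign across the $x_{k,n}$, a direct evaluation of \eqref{14} at the $x_{k,n}$ does not in general give $g_n(x_{k,n})$ an alternating sign in part (i).

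For part (i), I would first substitute $x=x_{k,n}$ in \eqref{14} to obtain
$$f(x_{k,n})\,g_n(x_{k,n})=(x_{k,n}-A)\,q_{n+1}(x_{k,n}).$$
The coprimality of $p_n$ and $g_n$ forces $A\neq x_{k,n}$ for every $k$: if $A=x_{j,n}$, then setting $x=x_{j,n}$ in \eqref{14} would yield $f(x_{j,n})g_n(x_{j,n})=0$, giving a common zero. I would then multiply both sides by $(x_{k,n}-A)$ to obtain
$$f(x_{k,n})\,(x_{k,n}-A)\,g_n(x_{k,n})=(x_{k,n}-A)^{2}\,q_{n+1}(x_{k,n}).$$
Since $f>0$ and $(x_{k,n}-A)^{2}>0$, the sign of $H(x_{k,n})g_n(x_{k,n})$ matches the sign of $q_{n+1}(x_{k,n})$, and the latter strictly alternates in $k$ by \eqref{intz1}. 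This furnishes $n-1$ sign changes of $Hg_n$ inside $(x_{1,n},x_{n,n})$; since $Hg_n$ is monic of degree $n+1$, comparing the signs it takes at $x_{1,n}$ and $x_{n,n}$ with its behaviour as $x\to\pm\infty$ supplies one further sign change in each of $(-\infty,x_{1,n})$ and $(x_{n,n},\infty)$. Counting then accounts for all $n+1$ zeros of $Hg_n$, one in each of the $n+1$ intervals determined by the $x_{k,n}$, which is the desired interlacing.

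For part (ii), I would instead substitute $x=z_{k,n+1}$ in \eqref{14}, giving
$$f(z_{k,n+1})\,g_n(z_{k,n+1})=D(z_{k,n+1})\,p_n(z_{k,n+1}).$$
Because $f$ and $D$ are continuous and nonzero on $(c,d)$ and each $z_{k,n+1}\in(c,d)$, both have constant sign on $(c,d)$, so $g_n(z_{k,n+1})$ inherits the sign pattern of $p_n(z_{k,n+1})$ up to a single global sign. The interlacing \eqref{intz1} forces $p_n(z_{k,n+1})$ to strictly alternate across $k=1,\dots,n+1$, so $g_n$ has $n$ sign changes in $(z_{1,n+1},z_{n+1,n+1})$; since $g_n$ has degree $n$, these account for all of its zeros, and they interlace with the zeros of $q_{n+1}$. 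Coprimality of $g_n$ and $q_{n+1}$ is used here to ensure $g_n(z_{k,n+1})\neq 0$, validating the sign count.

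The main obstacle is the squaring step in (i): one has to recognise that the natural object to analyse is $Hg_n$ rather than $g_n$ itself, and that multiplying \eqref{14} through by $(x-A)$ replaces the sign-changing factor $H$ by the nonnegative $H^{2}$, thereby decoupling the sign analysis from the location of $A$ relative to the $x_{k,n}$. Once this device is in place, the remainder is routine counting using the given interlacing and the monic leading coefficient of $Hg_n$.
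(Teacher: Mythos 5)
Your proposal is correct, and part (i) takes a genuinely different route from the paper. For part (ii) you do essentially what the paper does: evaluate \eqref{14} at consecutive zeros of $q_{n+1}$, use that $D$ and $f$ have fixed sign on $(c,d)$ to transfer the alternation of $p_n(z_{k,n+1})$ to $g_n(z_{k,n+1})$, and count. (One small correction: coprimality is not what guarantees $g_n(z_{k,n+1})\neq 0$ there --- it follows directly from the displayed identity since $D$, $f$ and $p_n$ are all nonzero at the $z_{k,n+1}$; the paper in fact derives the absence of common zeros as a conclusion rather than using it as an input.) For part (i), the paper evaluates $g_n$ itself at consecutive zeros of $p_n$, observes that the product $g_n(x_{i,n})g_n(x_{i+1,n})$ is negative exactly when $A\notin(x_{i,n},x_{i+1,n})$, and then runs a three-way case analysis on the position of $A$ (below $x_{1,n}$, between two consecutive zeros, or above $x_{n,n}$), producing the explicit configurations \eqref{i1}, \eqref{i2}, \eqref{i3}. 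Your device of multiplying the evaluated identity by $(x_{k,n}-A)$, so that the sign of $(Hg_n)(x_{k,n})$ is pinned to that of $q_{n+1}(x_{k,n})$ via the nonnegative factor $H^2$, collapses all three cases into a single alternation-and-count argument for the degree-$(n+1)$ polynomial $Hg_n$: $n-1$ sign changes interior to $(x_{1,n},x_{n,n})$ plus one forced zero on each side by the monic leading behaviour, accounting for all $n+1$ zeros. This is shorter and cleaner; what the paper's longer route buys is the explicit list of possible zero configurations, which the authors invoke immediately afterwards to remark that the extra interlacing point $A$ yields no uniform inner or outer bound on the extreme zeros. Your observation that coprimality forces $A\neq x_{k,n}$ matches the paper's opening reduction.
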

\begin{proof}
   \begin{itemize}
     \item[(i)]
    Since $p_n(x)$ and $q_{n+1}(x)$ have interlacing zeros, they do not have any common zeros and it follows from \eqref{14} that $g_{n}(x)$ and $p_n(x)$ can have at most one common zero, namely at the zero of $H(x)$. Assume that $g_{n}(x)$ and $p_n(x)$ do not have any common zeros, i.e., assume that $p_n(A)\neq0$ (or, equivalently, that $g_n(A)\neq 0)$. Let $x_{i,n}$, $i\in\{1,\dots,n\}$ denote the zeros of $p_n(x)$ and let $c=x_{0,n}$ and $d=x_{n+1,n}$. Then each of the $(n+1)$ intervals $(x_{\nu,n},x_{\nu+1,n})$, $\nu=0,1,2,\dots,n,$ contains exactly one zero of $q_{n+1}$. If $x_{i,n}$ and $x_{i+1,n}$, $i=1,2,\dots,n-1,$ are two consecutive zeros of $p_n(x)$, we have from \eqref{14}, that \begin{equation} g_{n}(x_{i,n})g_{n}(x_{i+1,n})=\frac{H(x_{i,n})H(x_{i+1,n})}{f(x_{i,n})f(x_{i+1,n})} q_{n+1}(x_{i,n})q_{n+1}(x_{i+1,n})\label{ix}\end{equation} is negative if and only if $A\notin(x_{i,n}, x_{i+1,n})$. Hence we deduce that, provided
$A\notin(x_{i,n}, x_{i+1,n})$, $g_{n}(x)$ has a different sign at
successive zeros of $p_n(x)$ and therefore has an odd number of zeros in each interval $(x_{i,n}, x_{i+1,n})$, $i=1,2,\dots,n,$ that
does not contain the point $A$. 
 \begin{enumerate}
   
\item Suppose that $A<x_{n,n}$.   Evaluating \eqref{14} at $x_{n,n}$, the greatest zero of $p_{n}(x)$, we have \begin{equation}f(x_{n,n}) g_{n}(x_{n,n})=H(x_{n,n})q_{n+1}(x_{n,n}).\label{x1}\end{equation}

Since $q_{n+1}(x_{n+1,n})>0$ (or $\lim_{x\to \infty}q_{n+1}(x)>0$ in the case when $d=x_{n+1,n}$ is infinite), and there is exactly one zero of $q_{n+1}(x)$ in the interval $(x_{n,n},x_{n+1,n})$ this implies that $q_{n+1}(x_{n,n})<0$. Since $A\,<x_{n,n}$, we have $H(x_{n,n})>0$ and it follows that the right-hand side of \eqref{x1} is negative. Hence $g_{n}(x_{n,n})<0$,  so there is at least one zero (or an odd number of zeros) of $g_{n}(x)$ in the interval $(x_{n,n},\infty)$.

\begin{enumerate}
    \item If $A<x_{1,n}$, denoting the zeros of $g_n(x)$ by $y_{i,n}$, $i\in\{1,2,\dots,n\}$, the only possibility for the arrangement of the zeros is \begin{equation}\label{i1}A<x_{1,n}<y_{1,n}<x_{2,n}<\dots<y_{n-1,n}<x_{n,n}<y_{n,n}.\end{equation}
\item If $A\in(x_{i^*,n},x_{i^*+1,n})$ for some $i^*\in \{1,\dots,n-1\}$, i.e., $x_{1,n}<A<x_{n,n}$, the
left-hand side of (\ref{ix}) is positive and hence the interval $(x_{i^*,n},x_{i^*+1,n})$ does not contain a zero of $g_{n}(x)$ or contains an even number of zeros of $g_{n}(x)$. Counting the available zeros and intervals, the only possibility for the arrangement of the zeros in this case is
\begin{align}\label{i2}y_{1,n}<x_{1,n}<y_{2,n}<\dots&<y_{i^*,n}<x_{i^*,n}<A<x_{i^*+1,n}<y_{i^*+1,n}<\nonumber\\&\dots<y_{n-1,n}<x_{n,n}<y_{n.n}.\end{align}
\end{enumerate}
\item Suppose that $A>x_{n,n}$, then $H(x_{n,n})<0$ and it follows from \eqref{x1} that $g_{n}(x_{n,n})>0$, so there are no zeros or an even number of zeros of $g_{n}(x)$ in the interval $(x_{n,n},\infty)$ containing $A$.  Counting the available zeros and intervals, the only possible arrangement for the zeros is
\begin{equation}\label{i3}y_{1,n}<x_{1,n}<y_{2,n}<\dots<x_{n-1,n}<y_{n,n}<x_{n,n}<A.\end{equation}
\end{enumerate} 
 It is clear that, in each case \eqref{i1}, \eqref{i2} and \eqref{i3}, the $n$ real zeros of $g_{n}(x)$ together with the point $A$, interlace with the zeros of $p_{n}(x)$.
 
\item[(ii)] For completeness we include a proof of this result which was stated in \cite[Cor. 1.2]{JT}, see also Lemma \ref{l1} (ii). By evaluating \eqref{14} at consecutive zeros $z_{i,n+1}$ and $z_{i+1,n+1}$, $i\in\{1,2,\dots,n\}$, of $q_{n+1}$ and multiplying the two equations we obtain
\[g_n(z_{i,n+1})g_n(z_{i+1,n+1})=\frac{D(z_{i,n+1})D(z_{i+1,n+1})}{f(z_{i,n+1})f(z_{i+1,n+1})}p_{n}(z_{i,n+1})p_{n}(z_{i+1,n+1}).\]
Since $D(z_{i,n+1})D(z_{i+1,n+1})>0$ and $f(z_{i,n+1})f(z_{i+1,n+1})>0$ for each $i\in\{1,2,\dots,n\}$ by the assumption that $D(x)$ and $f(x)$ have constant sign on $(c,\ d)$, it follows from \eqref{intz1} that \\$p_{n}(z_{i,n+1})p_{n}(z_{i+1,n+1})<0$. Hence $g_n(z_{i,n+1})g_n(z_{i+1,n+1})<0$ for each $i\in\{1,2,\dots,n\}$ which implies that $g_n(x)$ differs in sign at the consecutive zeros of $q_{n+1}(x)$, consequently $g_n(x)$ and $q_{n+1}(x)$ do not have any common zeros. Furthermore, the zeros of $g_n(x)$ are real and simple with an odd number of zeros of $g_n(x)$ between any two consecutive zeros of $p_{n+1}$. Counting the number of available zeros, we see that exactly one zero of $g_n(x)$ lies between any two consecutive zeros of $q_{n+1}(x)$.
\end{itemize}
\end{proof}
        
Note that the extra interlacing point obtained in Theorem \ref{DJboundsext} does not yield a general inner or outer bound for the extreme zeros of the polynomials $p_n$ or $g_n$ since all three cases \eqref{i1}, \eqref{i2} and \eqref{i3} are possible. In \S \ref{MP1} and \S \ref{PJ} we provide examples of polynomials whose zeros exhibit the type of interlacing proven in Theorem \ref{DJboundsext}. 

\section{Examples}

\subsection{Meixner-Pollaczek polynomials}\label{MP1} 

The monic Meixner-Pollaczek polynomials, defined by \cite[Section 9.7]{KLS}
\begin{equation}
P_n^{(\lambda)}(x;\phi)\label{MPdef}=  i^n(2\lambda)_n\left(\frac{ e^{2i\phi}}{e^{2i\phi}-1}\right)^n
\hypergeom{2}{1}{-n,\lambda+ix}{2\lambda}{1-\frac{1}{e^{2i\phi}}}
\;,
\end{equation}
are orthogonal with respect to the continuous weight $w(x)=|\Gamma(\lambda+ix)|^2\,e^{(2\phi-\pi)x}$ on the interval $(-\infty,\infty),$ for $n\in\nn,$  $\lambda>0$ and $0<\phi<\pi,$ where 
\[
\Gamma(z):=\int_0^\infty t^{z-1}e^{-t}dt
\]
denotes the Gamma function,
\[
\hypergeom{2}{1}
{\alpha_{1},\alpha_{2}}{\beta}{x}
=
\sum_{k=0}^\infty \frac
{(\alpha_{1})_{k}(\alpha_{2})_{k}}
{(\beta)_{k}}\,\frac{x^k}{k!}
\]
is the Gauss hypergeometric series and
\begin{eqnarray*}(a)_n& =&(a)(a+1)\cdots(a+n-1)~\mbox{for}~
n\geq1\\
(a)_0&=&1 ~\mbox{when}~ a\neq0\end{eqnarray*} is the Pochhammer symbol.

The recurrence equations used to prove interlacing properties can be obtained from the definition of the  Meixner-Pollaczek polynomials (\ref{MPdef}) and contiguous relations for $_2F_1$ hypergeometric functions \cite[p. 71]{rain}. It is straightforward to verify these equations by comparing coefficients of $x^n$. For the convenience of the reader we list the equations needed in this section:
\begin{align}\label{111}\left(\l^2+x^2\right) P_{n}^{(\l +1)}(x;\phi)&=\frac{\l (2 \l+n)}{2 \sin ^2 \phi}P_{n}^{(\lambda)}(x;\phi)+\left(x-\l \cot \phi \right) P_{n+1}^{(\lambda)}(x;\phi)\\
 \label{LL} P_{n}^{(\l)}(x;\phi )&=
  B(x) P_{n}^{(\l+1)}(x;\phi )+A(x) P_{n+1}^{(\l+1)}(x;\phi ) \\
\mbox{where}~~\nonumber A(x)&=\frac{4 \sin \phi  ( x \sin \phi-\l \cos \phi)}{(2 \l+n) (2 \l+n+1) }\\
 B(x)&=\frac{2\left(\l^2-\cos 2 \phi \left(\l^2+n\l+\l+x^2\right)+(n+1) x \sin 2 \phi +x^2\right)}{(2 \l+n+1) (2 \l+n) }\nonumber 
 \end{align}

\begin{proposition} \label{fin} Let $\l>0$ and $\phi\in(0,\pi)$. 
Provided that $P_n^{(\l)}\left(\l \cot \phi;\phi\right)\neq 0,$
the zeros of 
\begin{itemize}
\item[(i)] $P_n^{(\l)}(x;\phi)$ interlace with the zeros of  $\left(x-\l \cot \phi\right)P^{(\l+1)}_{n}(x;\phi)$;%(or, equivalently, $P_n^{\l+1}(\l \cot{\phi};\phi)\neq 0$).
%\item[(ii)] $P_n^{\l}(x;\phi)$  interlace with the zeros of $P^{\l+1}_{n-1}(x;\phi)$;
\item[(ii)] $P_n^{(\l+1)}(x;\phi)$ are real and simple and interlace with the zeros of  $P_{n+1}^{(\l)}(x;\phi).$  
\end{itemize}
\end{proposition}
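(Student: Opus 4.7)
The plan is to recognize the mixed recurrence \eqref{111} as an instance of the hypothesis \eqref{14} of Theorem \ref{DJboundsext}, with the identifications
\[
p_n(x)=P_n^{(\l)}(x;\phi),\qquad g_n(x)=P_n^{(\l+1)}(x;\phi),\qquad q_{n+1}(x)=P_{n+1}^{(\l)}(x;\phi),
\]
and
\[
f(x)=\l^{2}+x^{2},\qquad D(x)=\frac{\l(2\l+n)}{2\sin^{2}\phi},\qquad H(x)=x-\l\cot\phi.
\]
With $\l>0$ and $\phi\in(0,\pi)$, $f$ is strictly positive on $\rr$, $D$ is a positive constant, and $H(x)=x-A$ with $A=\l\cot\phi\in\rr$. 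Orthogonality of the Meixner--Pollaczek polynomials on $(-\infty,\infty)$ supplies the interlacing \eqref{intz1} between the zeros of $p_n$ and $q_{n+1}$, and monicity of the three families under the normalization \eqref{MPdef} is a routine leading-coefficient computation from the hypergeometric representation.

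For part (i) I would invoke Theorem \ref{DJboundsext}(i). The only hypothesis that needs checking is coprimality of $g_n$ and $p_n$. Since $p_n$ and $q_{n+1}$ have interlacing zeros they share no root, so \eqref{111} forces any common zero of $g_n$ and $p_n$ to coincide with the root of $H$, namely $A=\l\cot\phi$. Evaluating \eqref{111} at $x=A$ gives $(\l^{2}+A^{2})g_n(A)=D\,p_n(A)$, so $g_n(A)=0$ if and only if $p_n(A)=0$, and the standing assumption $P_n^{(\l)}(\l\cot\phi;\phi)\neq 0$ rules this out. Theorem \ref{DJboundsext}(i) then delivers the claimed interlacing of the $n+1$ zeros of $(x-\l\cot\phi)P_n^{(\l+1)}(x;\phi)$ with the $n$ zeros of $P_n^{(\l)}(x;\phi)$.

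For part (ii), $D(x)$ is a nonzero constant on $\rr$, so the hypothesis of Theorem \ref{DJboundsext}(ii) on $D$ holds. Coprimality of $g_n$ and $q_{n+1}$ is in fact automatic: any common zero $x_{0}$ would, via \eqref{111}, force $p_n(x_{0})=0$, contradicting the interlacing of the zeros of $p_n$ and $q_{n+1}$; thus the assumption $P_n^{(\l)}(\l\cot\phi;\phi)\neq 0$ is actually needed only for part (i). Theorem \ref{DJboundsext}(ii) then immediately yields that the zeros of $P_n^{(\l+1)}(x;\phi)$ are real and simple and interlace with those of $P_{n+1}^{(\l)}(x;\phi)$. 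I do not foresee a serious obstacle: once the recurrence \eqref{111} is in hand (and its constants identified with $f$, $D$, $H$ in the right way), both conclusions drop out of Theorem \ref{DJboundsext} by pattern-matching, and the only subtle bookkeeping is the coprimality check at the exceptional point $x=\l\cot\phi$.
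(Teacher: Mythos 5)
Your proposal is correct and follows essentially the same route as the paper: both apply Theorem \ref{DJboundsext}(i) and (ii) to the mixed recurrence \eqref{111} with the identifications $p_n=P_n^{(\l)}(x;\phi)$, $g_n=P_n^{(\l+1)}(x;\phi)$, $q_{n+1}=P_{n+1}^{(\l)}(x;\phi)$, $f(x)=\l^2+x^2$ and $H(x)=x-\l\cot\phi$. Your explicit coprimality checks (evaluating \eqref{111} at $x=\l\cot\phi$ for part (i), and noting that a common zero of $g_n$ and $q_{n+1}$ would force $p_n$ to vanish there for part (ii)) are exactly the details the paper leaves implicit in the hypothesis $P_n^{(\l)}(\l\cot\phi;\phi)\neq 0$ and in the remark following the proposition.
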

\begin{proof} \begin{itemize}
\item[]
\item[(i)] The result follows by applying Theorem \ref{DJboundsext}(i) to \eqref{111} with $g_n(x)=P_n^{(\l+1)}(x;\phi)$, $p_n=P_n^{(\l)}(x;\phi)$ and $H(x)=x-\l \cot \phi$. 
 \item[(ii)] This result was proved in \cite[Thm 3.1]{kjft} and also follows by applying Theorem \ref{DJboundsext}(ii) to \eqref{111} with $g_n(x)=P_n^{(\l+1)}(x;\phi)$, $p_n=P_n^{(\l)}(x;\phi)$ and $H(x)=x-\l \cot \phi$, since  
$\l^2+x^2>0$ on $\rr$ and $\frac{\l (2 \l+n)}{2 \sin ^2\phi} \neq 0$.
\end{itemize}
\end{proof}

\begin{remarks}\begin{itemize}\item[]
    \item It is not necessary to assume that $P_n^{(\l)}(x;\phi)$ and  $P^{(\l+1)}_{n-1}(x;\phi)$ are co-prime  in Proposition \ref{fin}(ii) since it follows from the equation \begin{align*} \left(\l^2+x^2\right)&(\cos{\phi}-\cos 3\phi) P_{n-1}^{(\l +1)}(x;\phi)\\&= (n x \sin 2\phi-\l-\l \cos 2\phi)P_{n}^{(\lambda)}(x;\phi)+2\l(2\l +n-1)\cos{\phi} P_{n-1}^{(\lambda)}(x;\phi)\end{align*} used in \cite{kjft} (which is not for the monic case) that there cannot be any common zeros.
\item
The point $\l\cot{\phi}$ that completes the interlacing of the zeros of $P_n^{(\l)}(x;\phi)$ and $P_n^{(\l+1)}(x;\phi)$ in Proposition \ref{fin}(i), depends only on the parameters $\l$ and $\phi$ and is independent of the degree $n\in\nn$.
\end{itemize}
\end{remarks}
Proposition \ref{fin} addresses interlacing of the zeros of Meixner-Pollaczek polynomials of the same degree and decreasing degree when the parameter $\l$ has been shifted by unity. \comment{Numerical examples show that interlacing in both of these cases breaks down for a parameter shift of $2$.} The zeros of $P_n^{(\l)}(x;\phi)$
and $P^{(\l+1)}_{n+1}(x;\phi)$, where both the degree and the parameter are increased by unity, do not interlace in general as can easily be verified with numerical examples. An interesting open question is whether there are conditions on the parameters for which interlacing between the zeros of $P_n^{(\l)}(x;\phi)$ and $P_{n+1}^{(\l+1)}(x;\phi)$ holds. See \cite{ADL}  for examples of such results for Laguerre polynomials. For the special case where $\phi=\frac{\pi}{2}$, the zeros of $P_n^{(\l)}(\frac{\pi}{2};\phi)$
and $P^{(\l+1)}_{n+1}(\frac{\pi}{2};\phi)$ do interlace in general for all $\l>0$ as we will prove in our next result.
\comment{When $\phi=\frac{\pi}{2},$ in \eqref{sp}, we obtain

\begin{equation}2 \left(\l (2 \l+n)+2 x^2\right) P_{n}^{\lambda}\left(x;\frac{\pi }{2}\right)=n(2 \l+n-1)x P_{n-1}^{\lambda}\left(x;\frac{\pi }{2}\right)+4 \left(\l^2+x^2\right) P_{n}^{\lambda+1}\left(x;\frac{\pi }{2}\right)
\end{equation}}

\begin{lemma} \label{symint} Let $\l>0$. Then the zeros of $P^{(\l)}_{n}(x,\frac{\pi}{2})$ interlace with the zeros of $P_{n+1}^{(\l+1)}(x;\frac{\pi}{2})$, provided $P^{(\l)}_{n}\left(x;\frac{\pi}{2}\right)$ and $ P^{(\l+1)}_{n+1}\left(x;\frac{\pi}{2}\right)$ are co-prime.
  \end{lemma}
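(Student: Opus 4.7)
My plan is to specialise the mixed recurrence \eqref{LL} at $\phi=\tfrac{\pi}{2}$ and then apply Theorem~\ref{DJboundsext}(ii). The key observation is that at $\phi=\tfrac{\pi}{2}$ the coefficient $B(x)$ multiplying $P_n^{(\l+1)}(x;\phi)$ in \eqref{LL} collapses to an expression that is strictly positive on $\rr$, which is precisely the sign-definiteness demanded by part~(ii) of Theorem~\ref{DJboundsext}.

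Using $\sin\phi=1$, $\cos\phi=0$, $\sin 2\phi=0$ and $\cos 2\phi=-1$ in the expressions for $A(x)$ and $B(x)$ listed below \eqref{LL}, I would obtain
\[
A(x)=\frac{4x}{(2\l+n)(2\l+n+1)},\qquad B(x)=\frac{2\bigl(\l(2\l+n+1)+2x^2\bigr)}{(2\l+n)(2\l+n+1)}.
\]
After clearing the common denominator, \eqref{LL} at $\phi=\tfrac{\pi}{2}$ becomes
\[
\tfrac{(2\l+n)(2\l+n+1)}{2}\,P_n^{(\l)}(x;\tfrac{\pi}{2})=\bigl(\l(2\l+n+1)+2x^2\bigr)P_n^{(\l+1)}(x;\tfrac{\pi}{2})+2x\,P_{n+1}^{(\l+1)}(x;\tfrac{\pi}{2}),
\]
which I would then match with \eqref{14} via the identifications $g_n(x)=P_n^{(\l)}(x;\tfrac{\pi}{2})$, $p_n(x)=P_n^{(\l+1)}(x;\tfrac{\pi}{2})$, $q_{n+1}(x)=P_{n+1}^{(\l+1)}(x;\tfrac{\pi}{2})$, $f(x)=\tfrac{(2\l+n)(2\l+n+1)}{2}$, $D(x)=\l(2\l+n+1)+2x^2$ and $H(x)=2x$.

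The hypotheses of Theorem~\ref{DJboundsext}(ii) are now easy to verify: $p_n$ and $q_{n+1}$ are consecutive members of the monic orthogonal polynomial sequence $\{P_k^{(\l+1)}(x;\tfrac{\pi}{2})\}_{k\in\nn}$ on $\rr$, so their zeros interlace on $(c,d)=\rr$; $f$ is a positive constant because $\l>0$, so $f(x)\neq 0$ on $\rr$; and crucially $D(x)=\l(2\l+n+1)+2x^2>0$ on $\rr$ since $\l>0$. The coprimality of $g_n$ and $q_{n+1}$ is exactly the standing assumption of the lemma. Theorem~\ref{DJboundsext}(ii) therefore applies and delivers the desired conclusion: the zeros of $P_n^{(\l)}(x;\tfrac{\pi}{2})$ are real and simple and interlace with those of $P_{n+1}^{(\l+1)}(x;\tfrac{\pi}{2})$.

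I do not anticipate a genuine obstacle; the argument reduces to a straightforward verification once \eqref{LL} has been specialised. The substantive point to highlight is that the positivity of $D$ is peculiar to $\phi=\tfrac{\pi}{2}$: for other values of $\phi$, $B(x)$ carries the oscillatory terms $(n+1)x\sin 2\phi$ and $-\cos 2\phi\,(\l^2+n\l+\l+x^2)$ which can change sign on $\rr$ and thereby obstruct a direct application of Theorem~\ref{DJboundsext}(ii). This is consistent with the remark immediately before the lemma to the effect that interlacing of the zeros of $P_n^{(\l)}(x;\phi)$ and $P_{n+1}^{(\l+1)}(x;\phi)$ fails for general $\phi\in(0,\pi)$.
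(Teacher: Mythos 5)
Your proof is correct and follows essentially the same route as the paper: specialise \eqref{LL} at $\phi=\tfrac{\pi}{2}$, observe that the coefficient of $P_n^{(\l+1)}(x;\tfrac{\pi}{2})$ reduces to $\tfrac{2(2\l^2+n\l+\l+2x^2)}{(2\l+n)_2}$, which is positive on $\rr$ for $\l>0$, and invoke Theorem~\ref{DJboundsext}(ii). The only (immaterial) difference is that you clear the common denominator so that $f$ is a positive constant rather than $1$.
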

\begin{proof}
 Let $\lambda>0$ and replace $\phi$ by $\frac{\pi}{2}$ in \eqref{LL} to obtain
 $$P^{(\l)}_{n}\left(x;\frac{\pi}{2}\right)=\frac{2\left(2 \l^2+n\l+\l+2 x^2\right) }{(2 \l+n)_2 }P^{(\l+1)}_{n}\left(x;\frac{\pi}{2}\right)+\frac{4 x }{(2 \l+n)_2 }P^{(\l+1)}_{n+1}\left(x;\frac{\pi}{2}\right).$$ 
Suppose $P^{(\l)}_{n}\left(x;\frac{\pi}{2}\right)$ and $ P^{(\l+1)}_{n+1}\left(x;\frac{\pi}{2}\right)$ are co-prime.
Since the coefficient of $P_{n}^{(\l+1)}(x;\frac{\pi}{2})$ does not change sign on $\mathbb{R}$ and the zeros of $P_{n}^{(\l+1)}(x;\frac{\pi}{2})$ and $P_{n+1}^{(\l+1)}(x;\frac{\pi}{2})$ interlace, it follows from Theorem \ref{DJboundsext} (ii) that the zeros of $P_{n}^{(\l)}(x;\frac{\pi}{2})$ are real and interlace with the zeros of $P_{n+1}^{(\l+1)}(x;\frac{\pi}{2})$. 
\end{proof}
\comment{\begin{remark}
We note that for $-1<\l<0$, $P_{n}^{\lambda}(x;\frac{\pi}{2})$ is quasi-orthogonal of order two and hence we have proved that the $n$ zeros of the quasi-orthogonal polynomial $P_{n}^{\lambda}(x;\frac{\pi}{2})$ interlace with the $(n+1)$ zeros of the orthogonal polynomial $P_{n+1}^{\l+1}(x;\frac{\pi}{2}).$ This implies that the $n$ zeros of the quasi-orthogonal $P_{n}^{\lambda}(x;\frac{\pi}{2}),-1<\l<0,$ {\red{ are real}}, i.e., they lie in the interval of orthogonality, which is not true in general for quasi-orthogonal polynomials \end{remark}}

\subsection{Pseudo-Jacobi polynomials}\label{PJ} 
For $a,b$ real, the monic Pseudo-Jacobi polynomials \cite{JT,KLS} are defined by 
\begin{equation*}
    P_n(x;a;b)=\frac{2^n (a+ib+1)_n} {i^n (2 a+n+1)_n}~~\hypergeom{2}{1}{-n,2 a+n+1}{a+ib +1}{\frac{1-i x}{2}}, n=0,1,2\dots.\end{equation*}
    
\comment{    \\
    &=&\frac{2^n (a+ib+1)_n} {i^n (2 a+n+1)_n}\sum_{k=0}^n \frac{(-n)_k(2a+n+1)_k}{(a+ib+1)_k} \frac{(1-ix)^k}{2^k k!}, n=0,1,2\dots.}

The following  recurrence equations will be used to prove interlacing results and bounds for the zeros of Pseudo-Jacobi polynomials:
{\small{\begin{align}\label{PJ1}
B\, P_n(x;a-1,b)=&D_2(x) \,P_n(x;a,b)-\left(x-\frac{b}{a+n}\right) P_{n+1}(x;a,b)\\
 \mbox{where}~~B=&\,\frac{(2 a+n-1) (2 a+n) \left((a+n)^2+b^2\right)}{(a+n)^2 \left(4 (a+n)^2-1\right)}~~\mbox{and}\nonumber \\
D_2(x)=&\,x^2+\frac{2 a^3+a^2 (5 n+2)+a n (4 n+3)+(n+1) \left(b^2+n^2\right)}{(a+n) (a+n+1) (2 a+2 n+1)}-\frac{b (n+1) x}{(a+n+1)(a+n)}\nonumber \end{align}}}
{\small{\begin{equation}\label{PJ4}
\left(x^2+1\right)P_{n}(x;a+1,b)=\frac{(2 a+n+1) \left((a+n+1)^2+b^2\right)}{(a+n+1)^2 (2 a+2 n+1)}P_{n}(x;a,b)+\left(x-\frac{b}{a+n+1}\right) P_{n+1}(x;a,b)
\end{equation}}}
\comment{\begin{equation}\label{PJ3}
  (1-2 a n -n) P_{n}(x;a-1,b)= \left(x-\frac{b}{a+n}\right) P_{n-1}(x;a,b)-(2 a+2 n-1) P_{n}(x;a,b)
\end{equation}}

\begin{proposition}  Let $b\in \mathbb{R}$. Then the zeros of 
\begin{itemize}
\item[(i)] $P_n(x;a,b)$ interlace with the zeros of  $(x-\frac{b}{a+n})P_{n}(x;a-1,b)$ if $a<-n$, provided that $P_n(\frac{b}{a+n};a,b)\neq 0$;%(or, equivalently, $P_n^{\l+1}(\l \cot{\phi};\phi)\neq 0$).
\item[(ii)] $P_n(x;a,b)$ interlace with the zeros of $(x-\frac{b}{a+n+1})P_{n}(x;a+1,b)$ if $a+1<-n$, provided that $P_n(\frac{b}{a+n+1};a,b)\neq 0$.
\item[(iii)] $P_n(x;a+1,b)$ are real and simple if $a+1<-n$ and interlace with the zeros of $P_{n+1}(x;a,b)$, provided that $P_n(\frac{b}{a+n+1};a,b)\neq 0$.
\end{itemize}
\end{proposition}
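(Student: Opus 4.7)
The plan is to apply Theorem~\ref{DJboundsext} directly to the mixed recurrence equations \eqref{PJ1} (for (i)) and \eqref{PJ4} (for (ii) and (iii)). Under the stated parameter restrictions $a<-n$ or $a+1<-n$, each of $P_n(x;a,b)$, $P_n(x;a\pm 1,b)$ and $P_{n+1}(x;a,b)$ lies in the finite orthogonality range of the Pseudo-Jacobi family (cf.\ \cite{JT,KLS}), and in particular $P_n(x;a,b)$ has $n$ real simple zeros that interlace on $\rr$ with the $n+1$ zeros of $P_{n+1}(x;a,b)$. This supplies the interlacing hypothesis \eqref{intz1} of Theorem~\ref{DJboundsext}.

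For part (i), I would match \eqref{PJ1} with \eqref{14} by taking $g_n(x)=P_n(x;a-1,b)$, $p_n(x)=P_n(x;a,b)$, $q_{n+1}(x)=P_{n+1}(x;a,b)$, $H(x)=x-\tfrac{b}{a+n}$, and $f(x)$ a nonzero constant proportional to $B$, arranged so that $f>0$ on $\rr$. Under $a<-n$ the product $(2a+n-1)(2a+n)$ is positive and $(a+n)^2+b^2>0$, so $B$ is a nonzero constant of fixed sign. For part (ii), \eqref{PJ4} is already in the form \eqref{14} with $f(x)=x^2+1>0$ on $\rr$, $H(x)=x-\tfrac{b}{a+n+1}$, and $D$ equal to the displayed constant, which is strictly positive under $a+1<-n$ since $2a+n+1<0$, $2a+2n+1<0$ and $(a+n+1)^2+b^2>0$. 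In both (i) and (ii), Theorem~\ref{DJboundsext}(i) then yields the stated interlacing once $g_n$ and $p_n$ are shown to be co-prime. Any common zero $x_0$ would, via \eqref{PJ1} or \eqref{PJ4}, satisfy $(x_0-A)P_{n+1}(x_0;a,b)=0$, and so must either equal $A$ (excluded by the non-vanishing hypothesis $P_n(A;a,b)\neq 0$) or coincide with a zero of $P_{n+1}(x;a,b)$ (ruled out by the interlacing of the zeros of $P_n(x;a,b)$ and $P_{n+1}(x;a,b)$).

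For part (iii), I would apply Theorem~\ref{DJboundsext}(ii) to \eqref{PJ4}: the coefficient $D$ is the positive constant just computed, hence nonzero on $\rr$. The co-primality of $g_n=P_n(x;a+1,b)$ and $q_{n+1}=P_{n+1}(x;a,b)$ is automatic here, since any shared zero $x_0$ would, via \eqref{PJ4}, force $D\,P_n(x_0;a,b)=0$ and produce a common zero of $P_n(x;a,b)$ and $P_{n+1}(x;a,b)$, again contradicting their interlacing. The main technical content throughout is the sign bookkeeping for $B$ and $D$ under the parameter regimes $a<-n$ and $a+1<-n$; once this is done, the three claims reduce to direct invocations of the two halves of Theorem~\ref{DJboundsext}.
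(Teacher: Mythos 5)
Your proposal follows the paper's proof exactly: part (i) is Theorem~\ref{DJboundsext}(i) applied to \eqref{PJ1}, parts (ii) and (iii) are Theorem~\ref{DJboundsext}(i) and (ii) applied to \eqref{PJ4}, with the same identifications of $g_n$, $p_n$ and $H$. The extra material you supply --- the co-primality arguments deducing a contradiction from $(x_0-A)P_{n+1}(x_0;a,b)=0$, and the sign computation showing the constant coefficient of $P_n(x;a,b)$ in \eqref{PJ4} is positive when $a+1<-n$ --- is correct and makes explicit what the paper leaves implicit.

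One point in part (i) is glossed over rather than proved, and it is the only substantive verification the application of Theorem~\ref{DJboundsext}(i) actually requires. You say $f$ is ``a nonzero constant proportional to $B$, arranged so that $f>0$''; but the sign of $f$ cannot be arranged freely. Once \eqref{PJ1} is put into the form \eqref{14} with $H$ equal to the \emph{monic} linear factor $x-\tfrac{b}{a+n}$ demanded by the theorem, the minus sign in \eqref{PJ1} forces $f\equiv -B$, so what must be checked is $B<0$. You verify only the numerator $(2a+n-1)(2a+n)\bigl((a+n)^2+b^2\bigr)>0$; the denominator contains the factor $4(a+n)^2-1$, which changes sign at $a+n=-\tfrac12$ inside the range $a<-n$, so the sign of $B$ is not settled by your computation. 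This matters: the hypothesis $f>0$ is genuinely used in the proof of Theorem~\ref{DJboundsext}(i) (it fixes the sign of $g_n(x_{n,n})$ via \eqref{x1}, and with $f<0$ the extra point $A$ lands on the wrong side and the claimed interlacing configuration fails). To be fair, the paper's own proof of part (i) is equally silent on this, so your write-up is no weaker than the original; but if you are adding sign bookkeeping at all, this is the sign that needs to be pinned down.
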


\begin{proof} \begin{itemize}
\item[]
\item[(i)] The result follows by applying Theorem \ref{DJboundsext}(i) to \eqref{PJ1} with $g_n(x)=P_n(x;a-1,b)$, $p_n=P_n(x;a,b)$ and $H(x)=x-\frac{b}{a+n}$. 
 \item[(ii)] The result follows by applying Theorem \ref{DJboundsext}(i) to \eqref{PJ4} with $g_n(x)=P_n(x;a+1,b)$, $p_n=P_n(x;a,b)$ and $H(x)=x-\frac{b}{a+n+1}$.
  \item[(iii)] The result follows by applying Theorem \ref{DJboundsext}(ii) to \eqref{PJ4} with $g_n(x)=P_n(x;a+1,b)$, $p_n=P_n(x;a,b)$ and $H(x)=x-\frac{b}{a+n+1}$ and taking into account that $x^2+1>0$ on $\rr$ and $\frac{(2 a+n+1) \left((a+n+1)^2+b^2\right)}{(a+n+1)^2 (2 a+2 n+1)}\neq 0.$ 
\end{itemize}
\end{proof}

\subsection{Continuous Hahn polynomials}\label{CH} 
Monic continuous Hahn polynomials, defined by
\begin{eqnarray}
p_n(x;a,b,c,d)=i^n\frac{(a+c)_n(a+d)_n}{(a+b+c+d+n-1)_n}
\hypergeom{3}{2}{-n,n\!+\!a\!+\!c\!+\!b\!+\!d\!-\!1,a\!+\!ix}{a+c,a+d}{1},
\end{eqnarray} are
orthogonal on the interval $(-\infty,\infty)$ with respect to the weight function
$$w(a,b,c,d,x)=\Gamma(a+ix)\Gamma(b+ix)\Gamma(c-ix)\Gamma(d-ix)$$  provided that the real parts of $a,b,c$ and $d$ are positive and $c=\bar{a}$, $d=\bar{b}$ (cf. \cite{KLS}). The sequence of polynomials satisfies the three-term recurrence equation 
\begin{align}&p_{n+2}(x;a,b,c,d)=\label{TTRRCH}\left(x-i~ C\right) p_{n+1}(x;a,b,c,d)-D~ p_{n}(x;a,b,c,d)\mbox{~~with}\\
C&=\nonumber\frac{(n+1) (b+c+n) (b+d+n)}{(a+b+c+d+2 n)_2}-\frac{(a+c+n+1) (a+d+n+1) (a+b+c+d+n)}{(a+b+c+d+2 n+1)_2}+a\mbox{~and}\\
D&=\nonumber\frac{(n+1) (b+c+n) (b+d+n)(a+c+n) (a+d+n) (a+b+c+d+n-1)}{(a+b+c+d+2 n) (a+b+c+d+2 n-1)_3}.
\end{align}
Considering the above-mentioned orthogonality conditions, we will shift parameters $a$ and $c$ (alternatively $b$ and $d$) simultaneously in order to retain the orthogonality of the polynomials. In what follows, our aim is to determine a mixed recurrence equation connecting the polynomials $p_{n}(x;a+1,b,c+1,d),$ $p_{n}(x;a,b,c,d),$ and $p_{n+1}(x;a,b,c,d).$ The parameter shifted monic continuous Hahn polynomial $p_{n}(x;a+k,b+m,c+k,d+m)$ is orthogonal on $\rr$ with respect to 
\begin{eqnarray*}
&& \Gamma(a+k+ix)\Gamma(b+m+ix)
\Gamma(c+k-ix)\Gamma(d+m-ix)\\
&&\hspace*{2cm}=\sigma_{k,m}(a,b,c,d,x)~ w(a,b,c,d,x)>0.
\end{eqnarray*} where $\sigma_{k,m}(a,b,c,d,x)=(a+ix)_k(b+ix)_m(c-ix)_k(d-ix)_m$ (cf. \cite{JNK_2017}). Then, for $p,r>0$, 
\begin{eqnarray*}
\sigma_{1,0}(a,b,c,d,x)&=&\sigma_{1,0}(p+iq,r+is,p-iq,r-is,x)\\&=&(p+iq+ix)(p-iq-ix)\\
&=& (p+i(q+x))(p-i(q+x))\\
&=& p^2+(q+x)^2,
\end{eqnarray*}
with zeros $x=-q+i~p$ or $x=-q-i~p$ and
\begin{eqnarray*}
w(a,b,c,d,x)&=& \Gamma(a+ix)\Gamma(b+ix)
\Gamma(c-ix)\Gamma(d-ix)\\
&=&\Gamma(p+iq+ix)\Gamma(r+is+ix)
\Gamma(p-iq-ix)\Gamma(r-is-ix)\\
&=&\Gamma(p+i(q+x))\Gamma(r+i(s+x))
\Gamma(p-i(q+x))\Gamma(r-i(s+x))\\
&=&\Gamma(p+i(q+x))\Gamma\overline{(p+i(q+x))}\Gamma(r+i(s+x))\Gamma\overline{(r+i(s+x))}\\
&=&\Gamma(p+i(q+x))\overline{\Gamma(p+i(q+x))}\Gamma(r+i(s+x))\overline{\Gamma(r+i(s+x))}\\
&=&\left|\Gamma(p+i(q+x))\right|^2\left|\Gamma(r+i(s+x))\right|^2
\end{eqnarray*}
  
We use the theorem of Christoffel (see \cite[Thm 2.7.1]{Ism} and \cite[Section 2.5]{szego}) to obtain the equation necessary to prove the interlacing of the zeros of $p_{n}(x;a+1,b,c+1,d)$ with the zeros of $p_{n+1}(x;a,b,c,d)$  and/or with the zeros of $p_{n}(x;a,b,c,d)$. The equation obtained is: 
\begin{align}
\nonumber U(a+ix)(c-ix)&p_{n}(x;a+1,b,c+1,d)\\
=&\left|\begin{matrix}\label{det1}
                   p_{n}(-i~\bar{a};a,b,c,d)& p_{n+1}(-i~\bar{a};a,b,c,d) & p_{n+2}(-i~\bar{a};a,b,c,d)\\
                   p_{n}(i~\bar{a};a,b,c,d)& p_{n+1}(i~\bar{a};a,b,c,d) & p_{n+2}(i~\bar{a};a,b,c,d)\\
                   p_{n}(x;a,b,c,d)& p_{n+1}(x;a,b,c,d) & p_{n+2}(x;a,b,c,d)
                    \end{matrix}\right|\end{align}                    
where \begin{align*}
U
=&\left|\begin{matrix}
                   p_{n}(-i~\bar{a};a,b,c,d)& p_{n+1}(-i~\bar{a};a,b,c,d)\\
                   p_{n}(i~\bar{a};a,b,c,d)& p_{n+1}(i~\bar{a};a,b,c,d) 
                    \end{matrix}\right|\\
=& p_{n}(-i~\bar{a};a,b,c,d)p_{n+1}(i~\bar{a};a,b,c,d) -p_{n}(i~\bar{a};a,b,c,d)p_{n+1}(-i~\bar{a};a,b,c,d)
                    \end{align*}
                  \comment{  Expanding the determinant in \eqref{det1} leads to an equation involving polynomials $p_n(x;a+1,b,c+1,d),p_{n}(x;a,b,c,d),p_{n+1}(x;a,b,c,d)$ and $p_{n+2}(x;a,b,c,d)$.}
                    
From here onwards, we will replace $a,b,c,d$ by $p+i q, r + i s, p- i q, r-i s,$ respectively, with $p,r>0$, and we will use the following notation: $p_n(x)=p_{n}(x;p+i q,r+i s,p-i q,r-i s),~p_{n+1}(-i p-q)= p_{n+1}(-i p-q;p+i q,r+i s,p-i q,r-i s)$, etc. The three-term recurrence equation  \eqref{TTRRCH} now can be written as:
\begin{align}
&p_{n+2}(x)\label{TTRR2}\\
&\nonumber =
\left(x+\frac{(n+1) (2 p+2 r-1) (q+s)+(n+1)^2 (q+s)+2 (p+r-1) (p s+q r)}{2 (n+p+r) (n+p+r+1)}\right)p_{n+1}(x)\\
&-\frac{(n+1) (n+2 p) (n+2 r) (n+2 p+2 r-1) (n+p+i q+r-i s) (n+p-i q+r+i s)}{4 (n+p+r)^2 (2 n+2 p+2 r-1) (2 n+2 p+2 r+1)}p_{n}(x) \nonumber\end{align} 
and \eqref{det1} as
\begin{align}
\nonumber U\left(p^2+(q+x)^2\right)&p_{n}(x;p+i q+1,r+i s,p-i q+1,r-i s)\\
=&\left|\begin{matrix}\label{det2}
                   p_{n}(-i p-q)& p_{n+1}(-i p-q) & p_{n+2}(-i p-q)\\
                   p_{n}(i p-q)& p_{n+1}(i p-q) & p_{n+2}(i p-q)\\
                   p_{n}(x)& p_{n+1}(x) & p_{n+2}(x)
                    \end{matrix}\right|\end{align}
When we expand \eqref{det2} and divide it by $U$, replace $p_{n+2}(x)$ by the expression given in \eqref{TTRR2} and simplify, we obtain
\begin{eqnarray}
&&\label{1}\left(p^2+(q+x)^2\right) p_n(x;p+i q+1,r+i s,p-i q+1,r-i s) \\ &=& H(x)~ p_{n+1}(x)\nonumber\\
&-&\nonumber\left(\frac{(n+1) (n+2 p) (n+2 r) (n+2 p+2 r-1) \left((n+p+r)^2+(q-s)^2\right)}{4 (n+p+r)^2 (2 n+2 p+2 r-1) (2 n+2 p+2 r+1)}-\frac{W}{U}\right) p_n(x),\mbox{~where~}\\
H(x)&=& \label{H} x+\frac{(n+1) (2 p+2 r-1) (q+s)+(n+1)^2 (q+s)+2 (p+r-1) (p s+q r)}{2 (n+p+r) (n+p+r+1)}-\frac{V}{U}\\
U&=&\label{U} p_n(-q-i p) p_{n+1}(-q+i p)-p_n(-q+i p) p_{n+1}(-q-i p),\\
V&=&\label{V} p_n(-q-i p)p_{n+2}(-q+i p)-p_n(-q+i p)p_{n+2}(-q-i p) \mbox{~~~and~}\\
W&=&\nonumber p_{n+1}(-q-i p)p_{n+2}(-q+i p)-p_{n+1}(-q+i p)p_{n+2}(-q-i p).
\end{eqnarray}

Similarly, shifting $b$ and $d$ to $b+1$ and $d+1,$ respectively, we have that
\begin{align*}\sigma_{0,1}(x)&=(b+i x)_1(d-i x)_1\\
&=(b+i x)(d-i x)\\
&=(r+i s +i x)(r+i s +i x)\\
&=r^2 +(s+x)^2,
\end{align*}
with zeros $-s-i~r$ and $-s+i~r$. In the same way as explained above, we obtain
\begin{eqnarray}
&&\label{22}\left(r^2+(s+x)^2\right)p_n(x;p+i q,r+i s+1,p-i q,r-i s+1) \\
&=&G(x)~p_{n+1}(x)\nonumber\\
&-&\nonumber\left(\frac{(n+1) (n+2 p) (n+2 r) (n+2 p+2 r-1) \left((n+p+r)^2+(q-s)^2\right)}{4 (n+p+r)^2 (2 n+2 p+2 r-1) (2 n+2 p+2 r+1)}-\frac{W1}{U1}\right) p_n(x),\mbox{~where~}\\
G(x)&=&\label{G} x+\frac{(n+1) (2 p+2 r-1) (q+s)+(n+1)^2 (q+s)+2 (p+r-1) (p s+q r)}{2 (n+p+r) (n+p+r+1)}-\frac{V1}{U1}\\
U1&=&\nonumber p_n(-s-i r) p_{n+1}(-s+i r)-p_n(-s+i r) p_{n+1}(-s-i r),\\
V1&=&\nonumber p_n(-s-i r)p_{n+2}(-s+i r)-p_n(-s+i r)p_{n+2}(-s-i r) \mbox{~~~and~}\\
W1&=&\nonumber p_{n+1}(-s-i r)p_{n+2}(-s+i r)-p_{n+1}(-s+i r)p_{n+2}(-s-i r).
\end{eqnarray}
\begin{proposition} \label{conthahn1} Let $p,r>0$. The zeros of 
\begin{itemize}
\item[(i)]$p_n(x;p+i q,r+i s,p-iq,r-is)$ interlace with the zeros of 
$$H(x)p_n(x;p+1+iq,r+is,p+1-iq,r-is),$$ where
$H(x)$ is given in \eqref{H},
provided that $p_n(x;p+1+iq,r+is,p+1-iq,r-is)$ and $p_n(x)$ do not have a zero in common; 
\item[(ii)] $p_n(x;p+i q,r+i s,p-iq,r-is)$ interlace with the zeros of 
$$G(x)p_n(x;p+iq,r+1+is,p-iq,r+1-is),$$
with
$G(x)$ given in \eqref{G}, provided that the polynomials $p_n(x;p+iq,r+1+is,p-iq,r+1-is)$ and $p_n(x)$ do not have a zero in common.
\end{itemize}
\end{proposition}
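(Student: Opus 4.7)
The plan is to recognize that equations \eqref{1} and \eqref{22} are already in exactly the form \eqref{14} required by Theorem \ref{DJboundsext}(i), so the proof reduces to matching up the pieces and verifying the hypotheses. I will describe part (i) in detail since part (ii) is entirely analogous.

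For (i), I take the role of $p_n$ in Theorem \ref{DJboundsext} to be played by $p_n(x) := p_n(x;p+iq,r+is,p-iq,r-is)$, the role of $q_{n+1}$ by $p_{n+1}(x)$, and set $g_n(x) := p_n(x;p+1+iq,r+is,p+1-iq,r-is)$. Then \eqref{1} reads $f(x)\,g_n(x) = D\,p_n(x) + H(x)\,p_{n+1}(x)$, with $f(x) := p^2+(q+x)^2$, with $D$ the (constant) coefficient of $p_n(x)$ on the right of \eqref{1}, and with $H(x)$ as in \eqref{H}, which is monic linear in $x$ and hence of the required form $x-A$ with $A \in \rr$ obtained by negating the constant term of \eqref{H}.

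Next I verify the hypotheses of Theorem \ref{DJboundsext}(i). \textbf{(a)} Since $p,r>0$, the weight $|\Gamma(p+iq+ix)|^2|\Gamma(r+is+ix)|^2$ is positive on $\rr$, so $\{p_n\}$ is an orthogonal sequence on $(-\infty,\infty)$ and the zeros of $p_n(x)$ and $p_{n+1}(x)$ are real, simple and interlace. \textbf{(b)} $f(x) = p^2+(q+x)^2 > 0$ on $\rr$ since $p>0$. \textbf{(c)} The shift $p \mapsto p+1$ preserves positivity of the real parts of the parameters together with the complex-conjugate pairing, so $g_n(x)$ is itself a monic orthogonal Continuous Hahn polynomial and in particular has $n$ real, simple zeros. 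Under the assumed coprimeness of $p_n(x)$ and $g_n(x)$, Theorem \ref{DJboundsext}(i) applies and delivers that the $n+1$ zeros of $H(x)g_n(x)$ interlace with the $n$ zeros of $p_n(x)$. Part (ii) follows in the same way from \eqref{22}, with $f(x) = r^2+(s+x)^2 > 0$ (since $r>0$) and $H(x)$ replaced by $G(x)$ from \eqref{G}.

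The substantive work in this proposition is not in this final application step but in establishing the mixed recurrences \eqref{1} and \eqref{22} through Christoffel's theorem, which the text has already performed; the only conceptual point left to check is that the parameter-shifted sequence is still orthogonal, and this is immediate because $p+1>0$ (respectively $r+1>0$) whenever $p>0$ (respectively $r>0$). I therefore expect no genuine obstacle beyond bookkeeping the identification with \eqref{14}.
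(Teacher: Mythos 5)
Your proposal is correct and follows essentially the same route as the paper: both proofs consist of identifying \eqref{1} and \eqref{22} with the form \eqref{14} and invoking Theorem \ref{DJboundsext}(i) with $f(x)=p^2+(q+x)^2$ (resp.\ $r^2+(s+x)^2$), $g_n$ the parameter-shifted polynomial, and $H$ (resp.\ $G$) as the linear coefficient. Your additional checks --- that $p_n$ and $p_{n+1}$ interlace as consecutive orthogonal polynomials and that the shift $p\mapsto p+1$ preserves orthogonality so $g_n$ has $n$ real simple zeros --- are hypotheses the paper leaves implicit, so you have if anything been slightly more careful.
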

\begin{proof} \begin{itemize}
\item[]
\item[(i)] Assume that the polynomials $p_n(x)$ and $p_n(x;p+1+iq,r+is,p+1-iq,r-is)$ do not have a common zero. Since $p^2+(q+x)^2>0$ on $\rr$, the result follows by applying Theorem \ref{DJboundsext}(i) to \eqref{1} with $g_n(x)=p_n(x;p+1+i q,r+ i s,p+1-i q,r- is)$, $p_n=p_n(x;p+i q,r+i s,p-i q,r-i s)$ and 
$$H(x)=x+\frac{(n+1) (2 p+2 r-1) (q+s)+(n+1)^2 (q+s)+2 (p+r-1) (p s+q r)}{2 (n+p+r) (n+p+r+1)}-\frac{V}{U},$$
where the values of $U$ and $V$ are given in \eqref{U} and \eqref{V}, respectively.
 \item[(ii)] We assume that the polynomials under consideration are co-prime.  Since $r^2+(s+x)^2>0$ on $\rr$, the result follows by applying Theorem \ref{DJboundsext}(i) to \eqref{22} with $g_n(x)=p_n(x;p+i q,r+1+ i s,p-i q,r+1- i s)$, $p_n=p_n(x;p+i q,r+i s,p-i q,r-i s)$ and 
$H(x)=G(x)$, given in \eqref{G}.
\end{itemize}\end{proof}

As an example, when $n=5,p=2,q=1,r=4,s=3,$ the zeros of $p_n(x;2+i,4+3 i,2-i,4-3i)$ are:
$$-4.445;-2.957;-1.746,-0.601,0.750$$
and they interlace with the zeros of 
\begin{itemize}
    \item [(i)]
$(x+0.636)p_n(x;2+1+i,4+3 i,2+1-i,4-3i)$, which are:
$$-4.766,-3.205,-1.889,\mathbf{-0.636},-0.597,0.911,$$
where $-0.636$, is the zero of $H(x)$, the linear coefficient of $p_{n+1}(x)$ in \eqref{1} for the given values;
\item[(ii)]
$(x+3.727)p_n(x;2+i,4+1+3 i,2-i,4+1-3i),$ namely
$$-4.483,\mathbf{-3.727},-2.919,-1.664,-0.485,0.915$$
where $-3.727$, is the zero of $G(x)$ in \eqref{22} for the given values.
\end{itemize}

In the special case when $a,b,c$ and $d$ are real, $x=0$ is the point needed to have the desired interlacing pattern.
\begin{corollary} \label{conthahn} Let $p,r>0$. Then the zeros of 
\begin{itemize}
\item[(i)]$p_n(x;p,r,p,r)$ interlace with the zeros of $x~p_n(x;p+1,r,p+1,r)$, when $n$ is even; 
\item[(ii)]$p_n(x;p,r,p,r)$ interlace with the zeros of $x~p_n(x;p,r+1,p,r+1)$, when $n$ is even. 
\end{itemize}
\end{corollary}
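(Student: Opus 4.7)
The plan is to apply Proposition \ref{conthahn1}(i) in the specialization $q=s=0$, which corresponds to $a=c=p$ and $b=d=r$ being real, and to show that the linear coefficient $H(x)$ from \eqref{H} reduces to $H(x)=x$ whenever $n$ is even. This identifies the extra interlacing point as $A=0$ and delivers part (i); part (ii) will follow by the identical argument applied to $G(x)$ in \eqref{G}, after swapping the roles of $(p,q)$ and $(r,s)$.

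The first step is to establish the parity $p_n(-x;p,r,p,r)=(-1)^n p_n(x;p,r,p,r)$. Because the weight function $w(x)=|\Gamma(p+ix)|^2|\Gamma(r+ix)|^2$ is an even function of $x$, the monic orthogonal polynomial sequence inherits this symmetry. Equivalently, a direct check shows that the coefficient $C$ appearing in \eqref{TTRRCH} vanishes identically when $a=c$ and $b=d$, so the three-term recurrence collapses to $p_{n+2}(x)=x\,p_{n+1}(x)-D\,p_n(x)$ with initial data $p_0=1$ and $p_1(x)=x$, from which the parity follows by induction.

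Granted the parity, I would simplify \eqref{H}. Setting $q=s=0$ kills both $q+s$ and $ps+qr$, so the middle constant in \eqref{H} is zero. For the ratio $V/U$, the parity yields, when $n$ is even, $p_n(-ip)=p_n(ip)$, $p_{n+2}(-ip)=p_{n+2}(ip)$, and $p_{n+1}(-ip)=-p_{n+1}(ip)$. Substituting into \eqref{U} and \eqref{V} gives $U=2\,p_n(ip)\,p_{n+1}(ip)$ and $V=0$, so $V/U=0$ and $H(x)=x$. Non-vanishing of $U$ is automatic: both $p_n(x;p,r,p,r)$ and $p_{n+1}(x;p,r,p,r)$ are orthogonal polynomials with respect to the positive weight $w(x)$, so they have real coefficients and only real zeros, and therefore cannot vanish at the purely imaginary argument $x=ip$ for $p>0$.

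The main obstacle is the parity claim in the first step; once it is available, the remainder is a short algebraic verification and the appeal to Proposition \ref{conthahn1}(i). A minor subtlety is the implicit co-primeness required to invoke Proposition \ref{conthahn1}, namely that $p_n(x;p,r,p,r)$ and $p_n(x;p+1,r,p+1,r)$ (respectively $p_n(x;p,r+1,p,r+1)$) share no common zero; this is the expected generic behaviour for two even polynomials of equal degree whose parameters differ by a unit shift, and should be noted as an implicit assumption in the statement.
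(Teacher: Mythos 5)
Your overall route is the same as the paper's: set $q=s=0$, show that the linear coefficient in \eqref{1} (resp.\ \eqref{22}) collapses to $H(x)=x$, and invoke Theorem \ref{DJboundsext}(i). Your parity argument --- the coefficient $C$ in \eqref{TTRRCH} vanishes in the symmetric case, hence $p_n(-x;p,r,p,r)=(-1)^np_n(x;p,r,p,r)$, hence $V=0$ and $U=2\,p_n(ip)\,p_{n+1}(ip)\neq 0$ --- is correct and gives a more explicit justification of the simplification that the paper simply records as equation \eqref{ch}.

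There is, however, one genuine gap: you leave the co-primeness of $p_n(x;p,r,p,r)$ and $p_n(x;p+1,r,p+1,r)$ as an ``implicit assumption'' to be appended to the statement. It is not an extra assumption; it is precisely what the hypothesis that $n$ is even buys you, and the corollary as stated (which carries no co-primeness clause, unlike Proposition \ref{conthahn1}) is not proved until you close this. From \eqref{ch}, any common zero $x_0$ of the two polynomials satisfies $x_0\,p_{n+1}(x_0;p,r,p,r)=0$; since consecutive polynomials in an orthogonal sequence share no zeros, $p_{n+1}(x_0;p,r,p,r)\neq 0$, so $x_0=0$. The parity you established shows that $0$ is a zero of $p_n(\cdot\,;p,r,p,r)$ and of $p_n(\cdot\,;p+1,r,p+1,r)$ exactly when $n$ is odd, so for $n$ even no common zero can occur and Theorem \ref{DJboundsext}(i) applies unconditionally --- this is exactly how the paper argues. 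Note also that your own computation gives $V=0$ for odd $n$ as well (with $U=-2\,p_n(ip)\,p_{n+1}(ip)$), so $H(x)=x$ for every $n$: the evenness of $n$ contributes nothing to the reduction of $H$, and its sole function in the corollary is the co-primeness just described. The same remark applies verbatim to part (ii) with $r$, $s$ and $G$ in place of $p$, $q$ and $H$.
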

\begin{proof} \begin{itemize}
\item[]
\item[(i)] 
By letting $q=s=0$ in \eqref{1}, we obtain
\begin{eqnarray}\label{ch}
   \left(p^2+x^2\right) p_{n}(x;p+1,r,p+1,r)=-A_n p_{n}(x;p,r,p,r)+x~p_{n+1}(x;p,r,p,r) 
\end{eqnarray}
where
\begin{align*}
A_n&=\frac{(n+1) (n+2 r) (n+2 p)  (n+2 p+2 r-1)}{4 (2 n+2 p+2 r+1)(2 n+2 p+2 r-1) }+\frac{p_{n+2}(i p;p,r,p,r)}{p_{n}(i p;p,r,p,r)}\\
&=\frac{(n+1) (n+2 r) (n+2 p)  (n+2 p+2 r-1)}{4 (2 n+2 p+2 r+1)(2 n+2 p+2 r-1) }-\frac{(n+2 p)_2 (n+p+r)_2 (n+2 p+2 r-1)_n}{(n+2 p+2 r+1)_{n+2}}.
\end{align*}

Let $n$ be an even number, then $p_n(0;p,r,p,r)\neq 0$ and $p_n(0;p+1,r,p+1,r)\neq 0$, since $x=0$ is a zero of all continuous Hahn polynomials with odd degree. The result follows by applying Theorem \ref{DJboundsext}(i) to \eqref{ch} with $g_n(x)=p_n(x;p+1,r,p+1,r)$, $p_n=p_n(x;p,r,p,r)$ and $H(x)=x$. 
 \item[(ii)] By letting $q=s=0$ in \eqref{22}, we obtain
 \begin{equation}\label{2}\left(r^2+x^2\right) p_n(x;p,r+1,p,r+1)=-B_n p_n(x;p,r,p,r)+x~ p_{n+1}(x;p,r,p,r),\end{equation} with
\begin{equation*}B_n=\frac{(n+1) (n+2 r)(n+2 p) (n+2 p+2 r-1)}{4(2 n+2 p+2 r+1)(2 n+2 p+2 r-1)}+\frac{p_{n+2}(i r;p,r,p,r)}{p_{n}(i r;p,r,p,r)}.\end{equation*}
Again, for $n$ even, $p_n(0;p,r,p,r)\neq 0$ and $p_n(0;p,r+1,p,r+1)\neq 0$, since $x=0$ is a zero of all continuous Hahn polynomials with odd degree. The result follows by applying Theorem \ref{DJboundsext}(i) to \eqref{2} with $g_n(x)=p_n(x;p,r+1,p,r+1)$, $p_n=p_n(x;p,r,p,r)$ and $H(x)=x$. 
  \end{itemize}
\end{proof}

As an example, when $n=6,p=2,q=0,r=4,s=0,$ the zeros of $p_n(x;2,4,2,4)$ are:
$$-3.041,-1.623,-0.511,~0.511,~1.623,~3.041$$
and they interlace with the zeros of 
\begin{itemize}
    \item [(i)]
$x~p_n(x;2+1,4,2+1,4)$, which are:
$$-3.316,-1.800,-0.575,~0,~0.575,~1.800,~3.316.$$
\item[(ii)]
$x~p_n(x;2,4+1,2,4+1),$ namely
$$-3.179,-1.694,-0.533,~0,~0.533,~1.694,~3.179.$$
\end{itemize}
\comment{\subsection{Meixner polynomials \label{meixner} }
The monic Meixner polynomials \cite[\S 9.10]{KLS} are defined by
\[
M_n(x;\beta,c)=(\beta)_n\left(\frac{c}{c-1}\right)^n
\hypergeom{2}{1}{-n,-x}{\beta}{1-\frac 1c}
\;,
\]
and are orthogonal with respect to the discrete weight $\rho(x)=\frac{c^x(\beta)_x}{x!}$ on $(0,\infty),$ for $0<c<1$ and $\beta>0$. 

Using Theorem \ref{DJboundsext} and the equations
\begin{eqnarray}\label{m1}(\beta +x) M_{n}(x,\beta +1,c)=\frac{\beta +n}{1-c}M_{n}(x,\beta ,c)+\frac{c }{cn-n+c}M_{n+1}(x,\beta ,c)
\end{eqnarray}
and 
\begin{eqnarray}\label{m2}
   &&\left(1-\frac{1}{c}\right)^2 (x+\b) (x+\b+1) M_{n}(x,\beta +2,c)\\
    &\nonumber=&\frac{\beta +n}{c} \left(\b-\left(1-\frac{1}{c}\right) (\beta +n+1)\right) M_{n}(x,\beta ,c)-\frac{\beta -\left(1-\frac{1}{c}\right) (2 \beta +n+x+1)}{\frac{c}{c-1}+n}M_{n+1}(x,\beta,c),
\end{eqnarray}
we can prove that, for $\b>0$ and $c\in(0,1)$,  
\begin{itemize}
\item[(i)] the zeros of $M_{n}(x;\b+1,c)$ are real and simple and interlace  with the zeros of  $M_{n+1}(x;\b,c)$ and with the zeros of $M_{n}(x;\b,c)$ ;
\item[(ii)] the  zeros of $M_{n}(x,\beta,c)$ interlace with the zeros of $M_{n}(x,\beta +2,c)$; 
\item[(iii)] the  zeros of $M_{n+1}(x,\beta,c)$ interlace with the zeros of $M_{n}(x,\beta+2,c)$.
\end{itemize}
These results can, however, be deduced from results proved in \cite{kjft}. Furthermore, triple interlacing of the zeros of $M_{n}(x,\beta,c)$, $M_{n-1}(x,\beta +2,c)$ and $M_{n}(x,\beta +2,c)$ is proved in \cite[Thm 2.1]{kjft}.}

\comment{Vorige deel: 

\begin{eqnarray}\label{m1}(\beta +x) M_{n}(x,\beta +1,c)=\frac{\beta +n}{1-c}M_{n}(x,\beta ,c)+\frac{c }{cn-n+c}M_{n+1}(x,\beta ,c)
\end{eqnarray}
\begin{eqnarray}\label{m2}
   &&\left(1-\frac{1}{c}\right)^2 (x+\b) (x+\b+1) M_{n}(x,\beta +2,c)\\
    &\nonumber=&\frac{\beta +n}{c} \left(\b-\left(1-\frac{1}{c}\right) (\beta +n+1)\right) M_{n}(x,\beta ,c)-\frac{\beta -\left(1-\frac{1}{c}\right) (2 \beta +n+x+1)}{\frac{c}{c-1}+n}M_{n+1}(x,\beta,c)
\end{eqnarray}

\begin{proposition} \label{fin1} Let $\b>0$ and $c\in(0,1)$. Then  
\begin{itemize}
\item[(i)] the zeros of $M_{n}(x;\b+1,c)$ are real and simple and interlace  with the zeros of  $M_{n+1}(x;\b,c)$ and with the zeros of $M_{n}(x;\b,c)$  {\red{Hierdie kan afgelei word van die reultaat in Corollary 2.2 in JT.}};
\item[(ii)] the  zeros of $M_{n}(x,\beta,c)$ interlace with the zeros of $M_{n}(x,\beta +2,c)$; 
\item[(iii)] the  zeros of $M_{n+1}(x,\beta,c)$ interlace with the zeros of $M_{n}(x,\beta+2,c)$.
\end{itemize}
\end{proposition}
\begin{proof} \begin{itemize}
\item[]
\item[(i)] From \eqref{m1}, we can deduce that $M_{n+1}(x;\b,c)$ and $M_{n}(x;\b+1,c)$ are co-prime. The interlacing of the zeros of $M_{n+1}(x;\b,c)$ and $M_{n}(x;\b+1,c)$ follows from Theorem \ref{DJboundsext}(ii) and  \eqref{m1} with $g_n(x)=M_n(x;\b+1,c)$, $p_n=M_n(x;\b,c)$  $H(x)=\frac{c }{cn-n+c}$, since $D(x)=\frac{\b+n}{1-c}\neq 0$ on $(0,\infty)$. 

The interlacing of the zeros of $(x+\b)M_n(x;\b+1,c)$ with the zeros of $M_{n+1}(x;\b,c)$ and with the zeros of $M_{n}(x;\b,c)$, follows from \cite[Lemma 1.1]{kjft}, and since $-\b<0$ and the zeros of the Meixner polynomials lie in $(0,\infty)$, this  leads to the required result. 
 \item[(ii)] This result is proved in \cite[Thm 2.1]{kjft}.
 
 \medskip
 From \eqref{m2}, we can deduce that $M_{n}(x;\b,c)$ and $M_{n}(x;\b+2,c)$ are co-prime. The result follows by applying Theorem \ref{DJboundsext}(ii) to \eqref{m2} with $g_n(x)=M_{n}(x,\beta +2,c)$, $p_n=M_{n}(x,\beta,c)$ and $D(x)=\frac{\beta +n}{c} \left(\b-\left(1-\frac{1}{c}\right) (\beta +n+1)\right)\neq 0$ on $(0,\infty)$. 

\item[(iii)] From \eqref{m2}, we can deduce that $M_{n+1}(x;\b,c)$ and $M_{n}(x;\b+2,c)$ are co-prime.
The interlacing of the zeros of 
$$\left(\beta -\left(1-\frac{1}{c}\right) (2 \beta +n+x+1)\right)M_{n}(x,\beta +2,c)$$ with the zeros of $M_{n+1}(x;\b,c)$
follows by applying Theorem \ref{DJboundsext}(i) to \eqref{m2} with $g_n(x)=M_{n}(x,\beta +2,c)$, $p_n=M_{n}(x,\beta,c)$ and $H(x)=\frac{\beta -\left(1-\frac{1}{c}\right) (2 \beta +n+x+1)}{\frac{c}{c-1}+n}$. The single zero of  $H(x)$ is
$-\frac{(n+1)(1-c)+\beta (2-c)}{1-c},$
which is clearly negative for $c\in(0,1)$ and $\b>0$ and the stated result follows.
\end{itemize}
\end{proof}
\begin{remark}
 Triple interlacing of the zeros of $M_{n}(x,\beta,c)$,  $M_{n-1}(x,\beta +2,c)$ and $M_{n}(x,\beta +2,c)$ is proved in \cite[Thm 2.1]{kjft}.  {\red{Ek dink ons moet (ii) en (iii) net uithaal... of dalk maar hele stelling/afdeling.}}
\end{remark}}

%\section*{Acknowledgements}
%The authors gratefully acknowledge support by ??? in the form of a contribution towards a writing retreat which enabled us to finalise this paper.

\end{document}